\title{Boolean Percolation on Doubling Graphs }
\author[1]{Cristian F. Coletti}
\author[2]{Sebastian P. Grynberg}
\author[1]{Daniel Miranda}
\affil[1]{Universidade Federal do ABC}
\affil[2]{Universidad de Buenos Aires}
\begin{document}
\maketitle

\begin{abstract}
We consider the discrete Boolean model of percolation on graphs satisfying a doubling metric condition. We study sufficient conditions on the distribution of the radii of
balls placed at the points of a Bernoulli point process for the absence of percolation, provided that the retention parameter of the underlying point process is small enough.  We
exhibit three families of interesting graphs where the main result of this work holds.
Finally, we give sufficient conditions for ergodicity of the discrete Boolean model of percolation.\\

\textbf{Keywords:} Boolean percolation, Point Processes, Doubling Spaces\\

\end{abstract}



\section{Introduction}

The aim of this work is to study sufficient conditions for subcriticality and complete coverage in the discrete Boolean model of percolation in weighted doubling graphs. We give now an informal
description of the Boolean model of percolation. Consider a simple point process $\cX$ in
some suitable metric space $(\Ga,d)$. Then,
at each point of $\cX$, center a ball of random radius.  Assume that the radius are independent, identically
distributed and independent of $\cX$. Thus, $\Ga$ is partitioned in two
regions, the occupied region, which is
defined as the union of all random balls, and the vacant region, which is the
complement of the occupied region.

In this paper we consider the case in which
$\Ga$ is a doubling weighted graph equipped with the weighted graph distance, the underlying point process $\cX$ is a Bernoulli point process with
retention parameter $p$ for some $p \in (0,1)$ and the random radii are independent and identically distributed non-negative integer-valued random variables. In this setting, we prove  the
absence of unbounded connected components on the occupied region.

This model is the discrete counterpart of the Poisson Boolean model of continuum percolation
%
which belongs to the family of con\-ti\-nuum percolation models. In fact, the history of continuum percolation began in 1961 when W. Gilbert
\cite{gilbert1961random}
introduced the random connection
model on the plane. In $1985$, S. Zuev and A. Sidorenko
\cite{zuev1985continuous} considered continuum models of percolation where
points are chosen randomly in space and surrounded by shapes which can be random
or
fixed. In that work the authors studied the relation between critical parameters
associated to that model. For a comprehensive study of continuum models of percolation,
see the book of R. Meester and R. Roy \cite{roy}. To the best of our knowledge, this is the first time that the discrete Boolean model of percolation appears in the literature.

We point out that that the discrete Boolean model of percolation in graphs where the underlying point process is a Bernoulli point process and the balls
under consideration are (closed) balls of radius $1/2$ (any positive number lower than one  would work as well) co\-rres\-ponds to the
case of independent site percolation.

In this paper we prove that if the underlying graph satisfies a doubling condition and if the family of random radii are i.i.d. random variables with finite $\asdim$-moment, where $\asdim$ is the corresponding Assouad dimension which will be defined in section \ref{dnsr}, then the connected components  arising in the discrete Boolean model are almost surely finite for sufficiently small values of $p$. We also prove that such behavior does not occur if the random radii have infinite $\asdim$-moment.

We remark that the absence of percolation on doubling graphs does not follow from the subcriticality of the Boolean percolation model in $\mathbb{R}^n$ and standard coupling arguments. Indeed, an interesting example where the main result of this work holds is given by the Cayley graph of the discrete Heisenberg group which can not be embedded in $\mathbb{R}^n$ for any $n$.




This paper is organized as follows. In section \ref{dnsr} we describe the discrete Boolean model of percolation and state the main theorems. In subsection \ref{doubling_graphs} we provide some interesting examples of graphs satisfying the assumptions of the main theorems. These examples include graph of polynomial growth such as self-similar graphs and Cayley graphs of nilpotent groups. In section \ref{mainp} and section \ref{infr} we prove the main results. In section \ref{ergodicitynumberofinfiniteclusters} we address the problem of determining sufficient conditions for ergodicity of the discrete Boolean model of percolation.

\section{Definitions, notation and statement of the main results}\label{dnsr}
\subsection{Doubling Graphs}

 Throughout this paper $\mathbb{N}$ will denote the set of natural numbers,  $\mathbb{N}_0$ will denote the set of non-negative integer numbers and $\Ga= \left(\mathcal{G}, \mu\right)$ will denote a  weighted graph, where $\mathcal{G}=(V,E)$ is a countably infinite connected graph. Here $\mu_{xy}$ is a non-negative function on $V \times V$ such that: (i) $\mu_{xy}=\mu_{yx}$; and (ii) $\mu_{xy} > 0$ if and only if $x \sim y$ (i.e. if and only if $x$ and $y$ are neighbors in $\Ga$). .
 
 A \emph{path} on $\cG(\cX,\cR)$ is a sequence of distinct vertex $v_0, v_1,\dots, v_n$ with $v_{i-1} \neq v_i$ such that $\{v_{i-1},v_i\}\in\cE$, $i=1,\dots, n$. The length of a path from $x$ to $y$ $(z_{0}=x,z_{1},\ldots,z_{n}=y)$
 is defined as $\sum_{i=1}^n \mu_{z_{i-1}z_i}$. Finally the
distance between two points $x,y$ denoted by   $d(x,y)$ is the length of a shortest path from $x$ to $y$.  We regard $\Ga$ as a metric space with the metric $d$ given by the weighted distance on the graph $\Ga$.


Also, $B(v,r)=\{u\in V:\,
d(u,v)\leq r\}$ denotes the closed ball of radius $r$ centered at $v$ and
$S(v,r)=\{u\in V:\, d(u,v)=r\}$ denotes the sphere of radius $r$ in $\Ga$ around
$v$.

We write $\medida{\cdot}$ for the counting  measure on $V$. We note that $\medida{B(v,r)}$ depends on the distance on $\Ga$ and consequently on the weights on the graph.


\begin{defi}
 We say that $(\Ga,d)$ is a \emph{doubling metric graph} if there exists a non-negative constant $C$
such that any ball $B$ in $\Ga$ can be covered with at most $C$ balls whose
radius is half the radius of $B$.
\end{defi}

 For further reading
on doubling metric spaces, see \cite{gromovmetric} and \cite{mackay2010conformal}. Doubling graphs arise naturally in many applications. For
instance, metric embedding of  doubling graphs turned out to be useful for
algorithm design. 
A related but stronger  condition is the volume and time doubling assumption on
graphs which has been used to
prove upper and lower off-diagonal, sub-Gaussian
transition probability estimates for strongly recurrent random walks, see \cite{telcs2001volume}. In section \ref{exampledoubling} we give  many examples of doubling graphs.

In \cite{Doyle}, Peter G. Doyle applies Rayleigh's short-cut method to prove P\'olya's recurrence theorem. In that work, the author uses the notion of doubling graphs in the proof of P\'olya's theorem for the $3$-dimensional lattice. It is worth mentioning that in \cite{Benjamini2011} the authors show how to construct planar graphs with the doubling property. Indeed, in that work the authors construct, for any $\alpha > 1$, a triangulation of the plane for which every ball of radius $r$ has (up to a multiplicative constant) $r^{\alpha}$ vertices.

\bigskip

\noindent {\bf{Assouad Dimension}} Now we introduce the concept of Assouad dimension which will be used to state our main result. To begin with, let $\epsilon > 0$ be given. We call a subset $A\subset V$
\emph{$\epsilon$-separated} if $d(v,w)\geq\epsilon$ for all distinct $v,w\in A$. Let $N(B,\epsilon)$ be the maximal cardinality of an $\epsilon$-separated subset of $B$. Then
$(\Ga,d)$ is doubling if and only if there exists $C<\infty$ such that

\begin{equation}
\label{1.4.5}
N(B(v,r), r/2)\leq C
\end{equation}
for all balls $B(v,r)$ in $\Ga$. An easy inductive argument lets us show that if \reff{1.4.5} holds, then there exists $C'$ and $\beta$ depending only on $C$ such that

\begin{equation}
\label{1.4.6}
N(B(v,r), \epsilon r)\leq C'\epsilon^{-\beta}
\end{equation}
for any ball $B(v,r)$ in $\Ga$ and any $0<\epsilon<1$. For instance, we may take $\beta=\log_2C$. Thus, the doubling condition is a finite-dimensional hypothesis which controls the
growth of the cardinalities of separated subsets of any ball at any scale and location. We now give a formal definition of Assouad dimension

\bdefi
Let $\operatorname{Cover}_{\Gamma}$ denote the set of all $\beta > 0$ for which there exist $C_{\beta} > 0$ such that, for any $B(v,r)$ in $\Gamma$ and for any $0 < \epsilon < 1$ we have $N(B(v,r), \epsilon r)\leq C\epsilon^{-\beta}$. The Assouad dimension of $\Gamma$ is then defined to be
\[
\asdim = \inf \{\beta \in \operatorname{Cover}_{\Gamma}\}.
\]

\edefi


For further details see \cite{assouad1}, \cite{assouad2} and references therein. A useful fact about the Assouad dimension which follows directly from its definition and which will be used later is that it is possible to control the volume of any ball in terms of $\asdim$.
It follows from (\ref{1.4.6}), by taking $\epsilon=1/r$, that there exists a constant $C_1$ depending only on $\asdim$ such that
\begin{equation}
\label{constant}
\medida{B\left(v,r\right)} \leq C_1 r^{\asdim}
\end{equation}
for any $v \in V$ and any $r \in \mathbb{N}$.

\subsection{Marked Point Process}
A \emph{Bernoulli point process} on $\Ga$ with retention parameter $p\in(0,1)$, is a family of independent $\{0,1\}$-valued random variables $\cX=(X_v:\, v\in V)$ with common law $\P$ such that $\P(X_v=1)=p$. Identify the family of random variables $\cX$ with the random subset $\cP$ of $V$ defined by $\cP=\{v\in V:\, X_v=1\}$ whose distribution is a product measure whose marginals at each vertex $v$ are Bernoulli distribution of parameter $p$.

By a \emph{Bernoulli marked point process} on $\Ga$ we mean a pair $(\cX,\cR)$ formed by a Bernoulli point process $\cX$ on $\Ga$ and a family of independent, identically distributed $\N_0$-valued random variables $\cR=(R_v: v\in V)$ called marks. We assume that these marks are independent of the point process $\cX$.

Let $(\cX,\cR)$ be a marked point process on $\Ga$ with retention parameter $p$ and marks with common distribution $\P_{\rho}$. We denote by $\E_{\rho}$ the expectation operator induced by $\P_{\rho}$. Also, we denote by $\P_{p,\,\rho}$ and $\E_{p,\,\rho}$ respectively the probability measure and the expectation operator induced by $(\cX,\cR)$.

\subsection{Random Graphs and Percolation} Let $(\cX,\cR)$ be a Bernoulli marked point process on $\Ga$. Then we define an associated random graph $\cG(\cX,\cR)=(V,\cE)$ as the undirected random graph with vertex set $V$ and edge set $\cE$ defined by the condition $\{v,w\}\in\cE$ if, and only if, $X_v=1$ and $w\in B(v,R_v)$ or $X_w=1$ and $v\in B(w,R_w)$.

A set of vertices $C\subset V$ is connected if, for any pair of distinct vertices $v$ and $w$ in $C$, there exists a path on $\cG(\cX,\cR)$ using vertices only from $C$, starting at $v$ and ending at $w$. The connected components of the graph $\cG(\cX,\cR)$ are its maximal connected subgraphs.

The cluster $C(v)$ of vertex $v$ is the connected component of the graph $\cG(\cX,\cR)$ containing $v$. Define the {\it{Percolation event}} as follows:
\begin{eqnarray}
\left(\mbox{Percolation}\right):=\bigcup_{v\in V}\left(\medida{C(v)}=\infty\right).
\end{eqnarray}

\subsection{Main Results}
Now we state the first main result of this work.

\bteo
\label{T1}
Let $\Ga$ be a doubling graph. Let $(\cX,\cR)$ be a marked point process on $\Ga$ with retention parameter $p$ and marks with common distribution $\P_{\rho}$. Let $R$ be a random variable whose distribution is $\P_{\rho}$. If
\begin{eqnarray}
\label{E100f}
\E_{\rho}\left[R^{\asdim}\right]<\infty,
\end{eqnarray}
then there exists $p_0>0$ such that $\P_{p,\,\rho}($Percolation$)=0$ for all  $p\leq p_0$.
\eteo

\paragraph{Complete Coverage} We complement the result of Theorem \ref{T1} by establishing a sufficient condition for complete coverage of $V$. For any $A\subset V$, define $\Lambda(A)=\bigcup_{v\in A\cap\cP}B(v,R_v)$.

We say  that the growth of a given graph is \textit{at least polynomial} if there exist constants $C$ and $d$ such that $\medida{B(v,r)}\geq C r^d$ for any $v\in V$ and $r>0$. Here $B(v,r)$ denotes the closed ball centered at $v$ of radius $r$ in $\Ga$.  Analogously we say  that the growth of a given graph is \textit{at most polynomial} if there exist constants $C^{\prime}$ and $d^{\prime}$ such that $\medida{B(v,r)}\leq C^{\prime} r^{d^{\prime}}$ for any $v\in V$ and $r>0$.

For an  almost-transitive  graph $\Ga$, i.e,  if the automorphism group of $\Ga$ acts on it with finitely many orbits we have the following equivalence \cite{Woessalmosttransitive}: if $\Ga$ is an almost-transitive graphs whose growth is at most polynomial then  its growth is at least polynomial. Indeed, almost-transitive graphs whose growth is at most polynomial are doubling graphs.


\bteo
\label{Riidinf}
Let $\Ga$ be an infinite, locally finite, 
graph whose growth is at least polynomial: $\medida{B(v,R)} \geq C_1 R^{C}$. Let $(\cX,\cR)$ be a marked point process on $\Ga$ with retention parameter $p$ and marks distributed according to the probability distribution $\P_{\rho}$. Let $R$ be a random variable whose law is $\P_{\rho}$. If $\E_{\rho}\left[R^{C}\right] = \infty$, then for any $p\in(0,1]$, $\Lambda(V)=V \ \P_{p,\,\rho}$-almost surely.
\eteo

It follows from the discussion above  that Theorem \ref{T1} and Theorem  \ref{Riidinf} hold for the family of almost-transitive graphs under a suitable moment condition.

In the discrete space version of the Boolean model of percolation considered in this work we pursue a geometric approach inspired by geometric arguments considered in \cite{Gouere}. There are considerable technical differences in the present work, though. The main ones are as follows: a) the growth of random balls is controlled by assuming a moment condition involving the Assouad dimension of the underlying graph and the random radii of the corresponding balls; this requires considerably more care in the proof of the main result of this work, and b) the ergodicity of the discrete Boolean model of percolation follows under the requirement that the underlying graph admits a family of symmetries which acts separating points.

\paragraph{Phase transition} Consider the discrete Boolean model of  percolation  introduced above.  As in other percolation models we define the percolation probability $\theta\left(p\right)$ by $\theta\left(p\right) = \textbf{P}_{p,\rho}\left(\mbox{Percolation}\right)$. A standard coupling arguments gives the monotonicity of $\theta\left(p\right)$ in $p$. Thus, the critical parameter
\begin{equation}
\pc\left(\Ga\right) = \sup \{p : \theta\left(p\right) = 0\}
\end{equation}
is well defined. 

Now replace the random radii in this model by the deterministic radius $1/2$. What we get is the independent site percolation model in $\Ga$.Then, a direct coupling with the site percolation model in $\Ga$ yields $\pc\left(\Ga\right) \leq p_c^{s}\left(\Ga\right)$, where $p_c^{s}\left(\Ga\right)$ is the critical parameter for independent site percolation in $\Ga$.

\begin{teo}\label{LyonsandYuval}
If $\Ga$ is the Cayley graph of a group $G$ with at most polynomial growth containing a subgroup isomorphic to $\mathbb{Z}^2$, then $\pc\left(\Ga\right) < 1$.
\end{teo}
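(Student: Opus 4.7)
We already have $p_c(\Gamma) \leq p_c^s(\Gamma)$, where $p_c^s(\Gamma)$ is the critical parameter for Bernoulli site percolation on $\Gamma$, so it suffices to prove $p_c^s(\Gamma) < 1$. The idea is to exploit the embedded subgroup $H \cong \mathbb{Z}^2$ to push a copy of $\mathbb{Z}^2$ into $\Gamma$ and then transfer the classical fact $p_c^s(\mathbb{Z}^2) < 1$ via a Liggett--Schonmann--Stacey stochastic domination argument. Polynomial growth of $G$ does not enter the upper bound per se; it is only used through Theorem \ref{T1} to give the matching lower bound $p_c(\Gamma) > 0$ and therefore a genuine phase transition.

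Fix a finite symmetric generating set $S$ defining $\Gamma$, and let $a, b \in H$ generate $H$. Write $a$ and $b$ as $S$-words of lengths $\ell_a, \ell_b$, and fix corresponding paths $\gamma_a, \gamma_b$ in $\Gamma$ from the identity $e$ to $a$ and $b$. For each $g \in H$, define $\gamma_a(g) := g\cdot\gamma_a$ and $\gamma_b(g) := g\cdot\gamma_b$; these are paths in $\Gamma$ of lengths $\ell_a, \ell_b$ from $g$ to $ga$ and $gb$. Identify $H$ with $\mathbb{Z}^2$ via $(m,n)\mapsto a^m b^n$. Declare a site $(m,n) \in \mathbb{Z}^2$ to be \emph{good} if every vertex in the finite set $V_{m,n} := \gamma_a(a^m b^n) \cup \gamma_b(a^m b^n)$ is open for the underlying Bernoulli site percolation on $\Gamma$. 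Each $V_{m,n}$ has cardinality at most $K := \ell_a + \ell_b + 1$ and is a translate of a fixed set, so $\mathbf{P}_p((m,n)\text{ good}) \geq p^K$, which tends to $1$ as $p \uparrow 1$. Moreover $V_{m,n}$ lies inside the ball of radius $\max(\ell_a, \ell_b)$ around $a^m b^n$ in $\Gamma$, and the map $(m,n)\mapsto a^m b^n$ is injective with discrete image, so there exists an integer $k_0$ (depending only on $S, a, b$) such that $V_{m,n} \cap V_{m',n'} = \emptyset$ whenever $\|(m,n)-(m',n')\|_\infty \geq k_0$. Hence the good process is $k_0$-dependent on $\mathbb{Z}^2$ with marginals tending to $1$.

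By the Liggett--Schonmann--Stacey domination theorem, for $p$ sufficiently close to $1$ the good process stochastically dominates independent site percolation on $\mathbb{Z}^2$ at any prescribed density, in particular at one strictly above $p_c^s(\mathbb{Z}^2) < 1$. Consequently the good process percolates with positive probability, and concatenating the path segments $\gamma_a(a^m b^n)$ and $\gamma_b(a^m b^n)$ along an infinite cluster in $\mathbb{Z}^2$ produces an infinite open cluster in $\Gamma$. This yields $p_c^s(\Gamma) < 1$, and hence $p_c(\Gamma) < 1$. The main delicate step is controlling the range of dependence of the good process: one must verify that the fattened translates $V_{m,n}$ remain well separated in the ambient Cayley graph, which is where the discreteness of the embedded $H$ is essential; once this is secured, the remainder is the standard renormalization via Liggett--Schonmann--Stacey.
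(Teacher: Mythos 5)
Your argument is correct, but it takes a genuinely different route from the paper: the paper offers no proof of Theorem \ref{LyonsandYuval} beyond the reduction $\pc(\Ga)\leq p_c^{s}(\Ga)$ followed by a citation of Theorem 7.17 and Corollary 7.18 of \cite{yuval} for the site-percolation statement, whereas you actually prove $p_c^{s}(\Ga)<1$ from scratch. Your renormalization is sound: since $a$ and $b$ commute, $(m,n)\mapsto a^{m}b^{n}$ is an injective homomorphic image of $\mathbb{Z}^2$, left-invariance of the word metric gives $d(a^{m}b^{n},a^{m'}b^{n'})=d(e,a^{m-m'}b^{n-n'})$, and local finiteness of $\Ga$ then yields the separation constant $k_0$ you need for $k_0$-dependence of the good field; the Liggett--Schonmann--Stacey comparison with supercritical site percolation on $\mathbb{Z}^2$ and the concatenation of the open segments $\ga_a,\ga_b$ along a good cluster then finish the job. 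What each approach buys: the paper's citation is shorter and inherits whatever generality the quoted results carry, while your argument is self-contained (modulo LSS and $p_c^{s}(\mathbb{Z}^2)<1$) and makes visible that the polynomial-growth hypothesis plays no role in the upper bound $\pc<1$ --- only the embedded $\mathbb{Z}^2$ is used, so you in fact prove a slightly stronger statement; your closing remark that polynomial growth is only needed (via Theorem \ref{T1}) to get the complementary bound $\pc>0$ is accurate but not required for the theorem as stated.
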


The proof of the previous theorem follows from the relation stated above between the critical parameters for independent site and Boolean percolation and Theorem $7.17$ and Corollary $7.18$ in \cite{yuval} where the authors proved that $p_c^{s}\left(\Ga\right) < 1$ for graphs satisfying the assumptions of Theorem \ref{LyonsandYuval}. Therefore, we may use Theorem \ref{T1} and Theorem \ref{LyonsandYuval} to prove that phase transition occurs for the discrete Boolean model of percolation.
See remark \ref{RemarkPansu} in subsection \ref{exampledoubling} for an application of Theorem \ref{LyonsandYuval}.

\subsection{Examples of Doubling Graphs}\label{exampledoubling}
\label{doubling_graphs}
The first and fundamental example of a doubling metric graph is  $\bbZ^n$
which has polynomial growth of order $n$. This example may be generalized, at least, in three ways: graphs with polynomial growth, Cayley graphs of nilpotent groups and self-similar graphs.

We observe that an easy way to obtain other examples of doubling graphs is to consider subgraphs of a doubling graph, since the property of a graph  being  doubling is hereditary, i.e., a subspace of a doubling metric space  is doubling (\cite{gromovmetric}, B.2.5).

\paragraph{Graphs with Polynomial Growth}
A large family of doubling metric graphs is the one composed of graphs with polynomial
growth. For a comprehensive study of such graphs see \cite{imrich1991survey} and
\cite{gromov}. For each $v\in V$, the
\emph{growth function} $\ga(v,\cdot):\N\to\N_0$ with respect to the vertex $v$, is given by
\begin{eqnarray}
\label{GFV}
\ga(v,r):=\medida{B(v,r)}.
\end{eqnarray}
It is worth mentioning that, in the case of transitive graphs the growth
function does not  depend on the choice of a particular vertex $v$ and in the
case of non-transitive graphs, the growth
functions for two different vertexes differs only by a multiplicative constant.

We assume that for each $r\in\N$ $\ga(r)=\sup_{v\in V}\ga(v,r)<\infty$. We say  that a given graph has \textit{polynomial growth} if there exist
constants $C$
and $d$ such that its associated growth function satisfies the inequality $C^{-1} r^d \leq \ga(v,r)\leq C r^d$ for any $v\in V$ and $r>0$.

A graph with polynomial growth satisfies a condition which is slightly
stronger than being a doubling metric space. It may be proved that in this case the graph is a
doubling measure space. To keep the paper self-contained we give the definition of doubling measure space.

\begin{defi}
Let $\left(M,d\right)$ be a metric space. A positive Borel measure space $\mu$ on $M$ is said to be doubling if there exists a constant $C > 0$ such that
\begin{equation}
\mu(2B) \leq C \mu (B) \nonumber
\end{equation}
for all balls $B$ in $M$.
\end{defi}

It follows from the previous definition that a graph with polynomial growth is a
doubling measure space with respect to the counting measure. Since metric spaces
admitting a doubling measure are doubling metric spaces (\cite{gromov},
B.3) we may conclude that graphs with polynomial growth form a family of
doubling metric graphs.

\paragraph{Cayley Graphs of Nilpotent Groups}
Let $G$ be a finitely generated group with generating set $H$. Assume that the identity element $e \notin H$. We may associate the Cayley graph $\Ga(G,H)$ of G with respect to $H$,
whose vertices are the elements of $G$. The set of edges $E(G,H)$ is defined as follows,

\[
E(G,H)=\{ (g,gh) | g\in G, h \in H\cup H^{-1}\}.
\]

Then, define the growth of the group $G$ as the growth of the Cayley graph
$\Ga(G,H)$ with respect to some (any) generating set $H$. The order of growth
is well defined, since changing the generating set $H$ only changes the
constants appearing in the bounds of the growth function.

Let $G$ be a group. If $H_1, H_2$ are subgroups of $G$, define $[H_1,H_2]$ to be the subgroup of $G$ gennerated by all commutators $\{[h_1,h_2]:=h_1 h_2 h_1^{-1} h_2^{-2} | h_1 \in H_1, h_2 \in H_2 \}$. For $n \in \mathbb{N}$ we inductively define $C_n(G)$ by

\[
C_0(G):=G \ \mbox{and} \ \forall \ n \in \mathbb{N} \ C_{n+1}(G):=[G,C_n(G)].
\]
The group $G$ is nilpotent if, there exists $n \in \mathbb{N}$ such that $C_n(G)$ is the trivial group. A group $G$ is almost nilpotent if it contains a nilpotent normal subgroup of finite index.


\begin{teo}[Gromov \cite{gromov} and Wolf \cite{wolf}] \label{nilpotent} A finitely generated
group has polynomial growth if and only if it is almost nilpotent.
\end{teo}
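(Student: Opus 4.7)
This is the celebrated Gromov--Wolf theorem, which is really two statements of very different depth; the proof combines an elementary direction due to Wolf with a landmark result of Gromov. My plan is therefore to treat the two implications separately, flag Gromov's direction as the real obstacle, and outline the standard strategy for each.

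\textbf{Easy direction (Wolf): almost nilpotent implies polynomial growth.} Suppose $G$ has a nilpotent subgroup $H$ of finite index. Since finite-index subgroups have comparable growth functions, it suffices to treat the case where $G$ itself is nilpotent of class $c$, with a finite symmetric generating set $H$ not containing $e$. The plan is to induct on $c$ using the lower central series $G=G_1 \supset G_2 \supset \cdots \supset G_{c+1} = \{e\}$, where $G_{k+1}=[G,G_k]$. The key mechanical fact is that any word of length $n$ in the generators can, using the commutator identities, be rewritten in a normal form in which letters from $G_k$ appear with multiplicity at most polynomial in $n$ of degree $k$. Collecting these bounds and using $G_k/G_{k+1}$ finitely generated abelian yields $\gamma(e,n) \leq C n^d$ with $d = \sum_{k=1}^{c} k\cdot \mathrm{rank}(G_k/G_{k+1})$; the matching lower bound is immediate because $\mathbb{Z}^d$ embeds (virtually) into the associated graded. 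This part is entirely routine and can be written down in a few pages.

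\textbf{Hard direction (Gromov): polynomial growth implies almost nilpotent.} Here the only viable plan is to follow Gromov's original strategy, which I would organize as follows. First, use the polynomial growth bound $\gamma(v,r)\leq C r^d$ to show that the sequence of rescaled pointed metric spaces $(\Gamma(G,H), d/n, e)$ is precompact in the Gromov--Hausdorff topology, and pass to a subsequential limit $(X,d_\infty,\ast)$ called an \emph{asymptotic cone} of $G$. Second, verify that $X$ is a locally compact, complete, homogeneous geodesic space of finite Hausdorff dimension, so by Montgomery--Zippin's solution to Hilbert's fifth problem its isometry group $\mathrm{Isom}(X)$ is a Lie group with finitely many connected components. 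Third, build from the $G$-action on itself a homomorphism $G \to \mathrm{Isom}(X)$ whose image is not relatively compact; combined with the Tits alternative (or Jordan's theorem on finite subgroups of $GL_n(\mathbb{R})$, which is all one actually needs here) this forces $G$ to have a finite-index subgroup mapping nontrivially onto $\mathbb{Z}$. Fourth, induct on the degree of polynomial growth using the fact that the kernel of this map is again finitely generated of polynomial growth of strictly smaller degree, to conclude that $G$ has a nilpotent subgroup of finite index.

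\textbf{Main obstacle.} The elementary direction offers no real difficulty. The entire substance of the theorem is in Gromov's direction, and within that direction the hard step is the third one: producing a Lie group quotient of $G$ out of the a priori very weak information that $G$ acts by isometries on the asymptotic cone. This is precisely where Gromov's original argument invokes Montgomery--Zippin and a delicate displacement/centralizer analysis, and I would not try to reprove it from scratch; I would instead cite \cite{gromov} (and \cite{wolf} for the easy direction) exactly as the statement of Theorem~\ref{nilpotent} already does.
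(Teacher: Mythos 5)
The paper offers no proof of Theorem~\ref{nilpotent}; it is stated purely as a cited result of Gromov and Wolf, which is exactly where your proposal also lands: you sketch the two standard directions accurately (Wolf's induction on the lower central series, and Gromov's limit-space/Montgomery--Zippin argument) and then correctly defer to \cite{gromov} and \cite{wolf} for the substance. Your outline is sound and consistent with the paper's treatment, so there is nothing to reconcile.
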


It follows from Theorem \ref{nilpotent} above that Cayley graphs of nilpotent groups have polynomial growth which in turns implies that they form a family of doubling metric graphs.

\begin{rem}\label{RemarkPansu}{\rm
A concrete example of a group with polynomial growth is  the discrete
Heisenberg group $\mathcal{H}_3(\bbZ)$, where

\[
 \mathcal{H}_3(\bbZ):=\left\{ \left( \begin{array}{ccc}
            1 & x& z \\
            0 & 1 & y\\
            0 & 0& 1 \\
                     \end{array}\right): x,y,z \in \bbZ
   \right\}.
\]
Since the discrete Heisenberg group is nilpotent, it has polynomial growth. Indeed, $\mathcal{H}_3(\mathbb{Z})$ has polynomial growth of order $4$. For further details we refer the reader to \cite{gromov}.

Observe that subgroup of $\mathcal{H}_3(\mathbb{Z})$ generated by
\[
 H:=\left\{\left( \begin{array}{ccc}
            1 & 1& 0 \\
            0 & 1 & 0\\
            0 & 0& 1 \\
                     \end{array}\right), \left( \begin{array}{ccc}
            1 & 0& 1 \\
            0 & 1 & 0\\
            0 & 0& 1 \\
                     \end{array}\right)
   \right\}
\]
is isomorphic to $\mathbb{Z}^2$. Then it follows from Theorem \ref{T1} and Theorem \ref{LyonsandYuval} that the discrete Boolean model of percolation on the Cayley graph of  $\mathcal{H}_3(\mathbb{Z})$ exhibits {\it{phase transition}} under the assumption that the random radius of a ball has finite fourth moment.

It is worth mentioning that, by a discrete version of Pansu Theorem \cite{pansu1989metriques}, {\it{the discrete Heisenberg group}} is an example of a doubling metric space which {\it{cannot be embedded in $\bbR^n$}}, for any $n$. Therefore, the absence of percolation in doubling graphs does not follow from the subcriticality of the Boolean percolation model in $\mathbb{R}^n$ and standard coupling arguments.
}
\end{rem}

\paragraph{Self-Similar Graphs}
A large family of a doubling metric graph are the self-similar graphs.   Self-similar graphs can be seen as discrete versions of  self-similar sets.

Let $\Gamma=\left(V(\Gamma),E(\Gamma)\right)$ be a graph with vertex set $V(\Gamma)$ and edge set $E(\Gamma)$. Let $F$ be a set of vertices in $\V \Ga$. Then $\cc_{\Ga}F$ denotes
the set of connected components in $\V \Ga\setminus F$. We define the
\emph{reduced graph} $\Ga_{F}$ of $\Ga$ by setting $\V \Ga_{F}=F$ and
connecting two vertices $x$ and $y$ in $\V \Ga_{F}$ by an edge if
and only if there exists a $C\in\cc_{\Ga}F$ such that $x$ and $y$
are in the boundary of $C$.

\begin{defi}\label{ss_in_growth}  We say that $\Ga$ is \emph{self-similar} with
respect to $F$ and $\psi:\V \Ga\to\V \Ga_{F}$ if
\begin{enumerate}
\item[(F1)] no vertices in $F$ are adjacent in $\Ga$,
\item[(F2)] the intersection of the closures of two different components in $\cc_{\Ga}F$
contains not more than one vertex and
\item[(F3)] $\psi$ is an isomorphism between $\Ga$ and $\Ga_{F}$.
\end{enumerate}
\end{defi}

We say that a graph has  bounded geometry if the set of vertex degrees is bounded.

\begin{teo}[Kr{\"o}n \cite{kron2004growth}]The Assouad dimension of homogeneously self-similar
graphs of bounded geometry are finite and equal to the Hausdorff dimension
of self-similar sets.
\end{teo}

\begin{figure}[h]
\centering
\includegraphics[width=6cm]{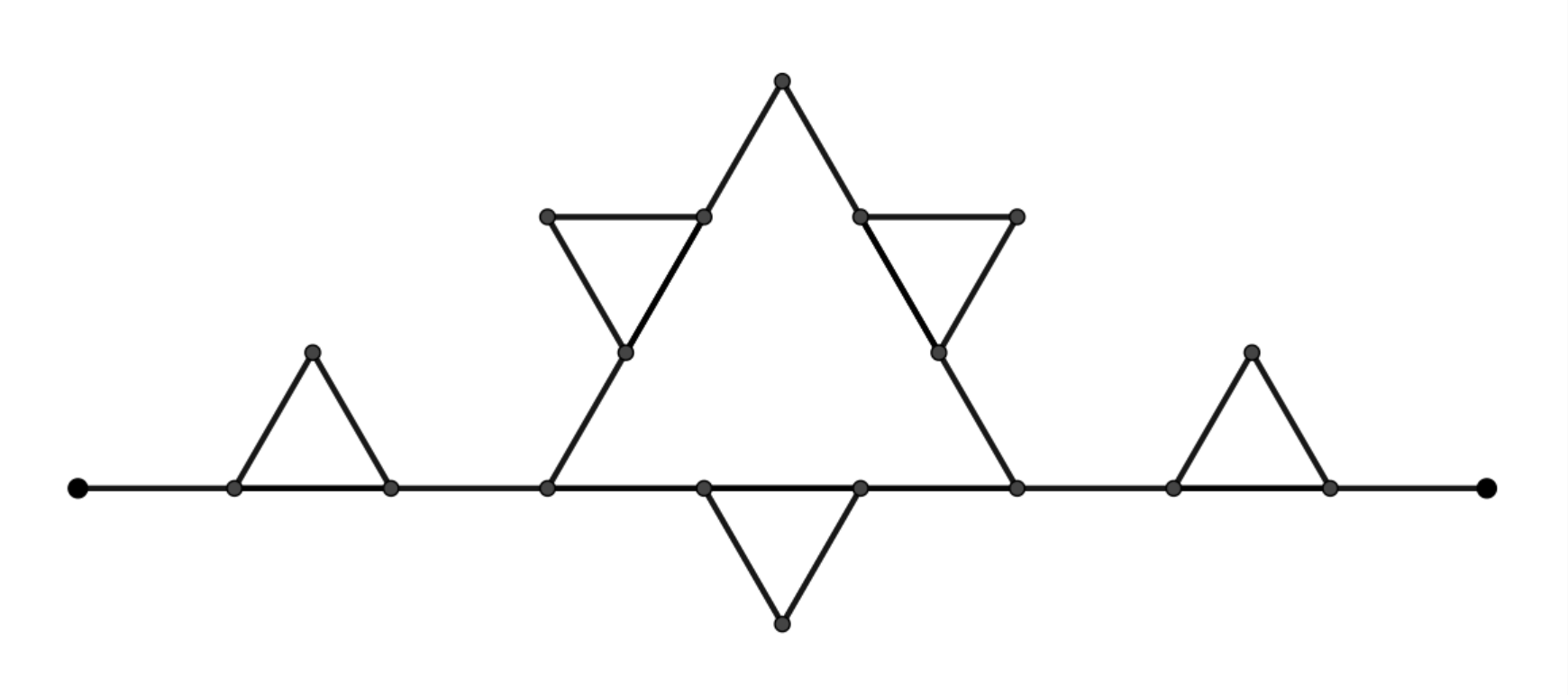}
\caption{The modified Koch curve is an homogeneously self-
similar  graph with  bounded geometry and Hausdorff and Assouad dimension  $\log 5/\log 3 $ \cite{kron2004growth}.}
\end{figure}

\section{Proof of Theorem \ref{T1}}
\label{mainp}
The proof of Theorem \ref{T1} will be divided into two steps. In the first step
we introduce two families of events, $F(v,r)$ and $H(v,r)$, in order to study the
diameter of the cluster $C(v)$. The family of events $F(v,r)$ is helpful to understand the behavior of the the diameter of the cluster $C(v)$ on the subgraph of $\cG(\cX,\cR)$
induced by the point process on $B(v,10r)$. The family of events $H(v,r)$ provides a way to take care of the influence of the point process $(\cX,\cR)$ from the exterior of the ball $B(v,r)$. Our aim in this step is to show that the probability of the
percolation event can be controlled by the probabilities of the events $F(v,r)$.
In the second step, we will show that if the radii are not too large, then the
occurrence of the event $F(v,r_1)$ implies the occurrence of two independent
events $F(u,r_2)$ and $F(w,r_2)$ where $r_1=10r_2$. Our aim in this step is to
show that the probability of the events $F(v,r_1)$ can be bounded from above by the square of the probability of the events $F(u,r_2)$ plus a quantity that goes to zero
when $r_1$ goes to infinity. This provides a way to take care of the probabilities
of the events $F(v,r)$ that allows us to show that for $p$ small enough,
$\P_{p,\,\rho}(F(v,r))$ goes to zero when $r$ goes to infinity.

\subsection{Controlling the diameter of the clusters $C(v)$}
For each $v\in V$, let $D_v=\inf\{r\geq 0:\, C(v)\subset B(v,r)\}$. The
percolation event is equivalent to the event $\bigcup_{v\in V}\{D_v=\infty\}$.
The proof of Theorem \ref{T1} is reduced to show that there exists $p_0>0$ such
that for each $v\in V$, $\lim_{r\to\infty}\P_{p,\,\rho}(D_v>r)=0$ for all
$p<p_0$.

For each $v\in V$ we define two families of events to study the diameter of the cluster $C(v)$.

\paragraph{The family of events $F(v,r)$} Let $B$ be a subset of $V$. Denote by
$\cG[B]$ the subgraph of $\cG(\cX,\cR)$ induced by $B$. Let $A$ be a non-empty subset of
$V$ contained in $B$ and let $v\in A$. We say that $v$ is
\emph{disconnected from the exterior of $A$ inside $B$} if the connected
component of $\cG[B]$ containing $v$ is contained in $A$. Now we introduce the events $F(v,r)$. Let $v\in V$ and let
$r\in\N$, we say that $F(v,r)$ does not occur if $B(v,r)$ is disconnected from the
exterior of $B(v,8r)$ inside $B(v,10r)$.

\paragraph{The family of events $H(v,r)$} For each $v\in V$ and $r\in\N$, we define
\begin{eqnarray}
\label{Hvr}
H(v,r)=\left\{\exists\,w\in\cP\cap B(v,10r)^c:\, R_w>\frac{d(w,v)}{10}\right\}.
\end{eqnarray}

The relation between the diameter of the cluster at $v$ and the families of events defined above is established in the following lemma.

\blem
\label{GHM}
The following inclusion holds for all $r\in\N$:
\begin{eqnarray}
\label{Mgrande2}
F(v,r)^c\cap H(v,r)^c\subset\left\{D_v\leq 8r\right\}.
\end{eqnarray}
\elem

\subsection*{Proof of Lemma \ref{GHM}}
If the event $H(v,r)$ does not occur, then there are no sites of the point process with distance to $v$ greater than $10r$ connected to the ball $B(v,9r)$. Indeed, assume that $H(v,r)$ does not occur. Then for every $w\in\cP\cap B(v,10r)^c$ we have $d(w,v)-R_w\geq\frac{9}{10}d(w,v)>9r$. Using the triangle inequality it is easy to verify that $d(u,v)\geq d(w,v)-R_w>9r$ for all $u\in B(w,R_w)$. If $F(v,r)$ does not occur, then the ball $B(v,r)$ is isolated from the exterior of $B(v,8r)$. If, in addition, the event $H(v,r)$ does not occur, then the balls $B(w,R_w)$, $w\in\cP\cap B(v,10r)^c$, do not  connect any vertex inside $B(v,r)$ to the complement of $B(v,8r)$. Thus $D_v\leq 8r$.
\hfill\square

\medskip

From \reff{Mgrande2} we get
\begin{eqnarray}
\label{Mgrande3}
\P_{p,\,\rho}(D_v>8r)\leq \P_{p,\,\rho}(F(v,r))+\P_{p,\,\rho}(H(v,r)).
\end{eqnarray}
In the following lemma we show that $\lim_{r\to\infty}\P_{p,\,\rho}(H(v,r))=0$ for all $p\in(0,1)$. Then we need to show that $\P_{p,\,\rho}(F(v,r))$ goes to zero as $r\to\infty$.


\blem
There exists a positive constant $C$ , which depends only on the Assouad dimension $\asdim$ of $\Ga$, such that for each $v\in V$ and $r\in\N$, the following inequality holds:
\begin{eqnarray}\label{0}
\P_{p,\,\rho}\left(H(v,r)\right) \leq C \ \E_{\rho}\left[R^{dim_A(\Gamma)}1\{R > r\}\right].
\end{eqnarray}
Furthermore, if $\E_{\rho}\left[R^{dim_A(\Gamma)}\right] < +\infty$, then $\displaystyle \lim_{r \rightarrow +\infty} \P_{p,\,\rho}\left(H(v,r)\right) = 0$.
\elem

\begin{proof}
Let

\[
L_v(r) := \displaystyle \sum_{w \in B(v,10r)^c} 1\{w \in \mathcal{P}\} 1\{w \in B(v,10R_w)\}
\]
and note that $\P_{p,\,\rho}\left(H(v,r)\right) = \P_{p,\,\rho}\left(L_v(r) \geq 1\right)$. It follows that

\begin{eqnarray}
\P_{p,\,\rho}\left(H(v,r)\right) &\leq& \E_{p,\,\rho}\left[L_v(r)\right] \nonumber \\
&\leq& \E_{p,\,\rho}\left[\displaystyle \sum_{w \in B(v,10r)^c}  1\{w \in B(v,10R_w)\}\right] \nonumber \\
&=& \displaystyle \sum_{w \in B(v,10r)^c}  \E_{p,\,\rho}\left[1\{w \in B(v,10R_w)\}\right] \nonumber \\
&=& \displaystyle \sum_{w \in B(v,10r)^c} \sum_{s > r} 1\{w \in B(v,10s)\} \P_{\rho}(R_w = s) \nonumber \\
&=& \displaystyle \sum_{s > r} \medida{B(v,10s) \setminus B(v,10r)}  \P_{\rho}(R = s) \nonumber \\
&\leq& \displaystyle \sum_{s > r} \medida{B(v,10s)}  \P_{\rho}(R = s) \nonumber \\
&\leq& C \displaystyle \sum_{s > r} s^{\asdim}  \P_{\rho}(R = s) \nonumber
\end{eqnarray}
where $C=C_1 10^{\asdim}$. The last inequality and the choice of the constant $C$ follow from (\ref{constant}). This gives (\ref{0}).
\end{proof}

\subsection{Controlling the probabilities of the events $F(v,r)$}
To take care of the probabilities $\P_{p,\,\rho}(F(v,r))$ we introduce another family of events.

\paragraph{The family of events $\tilde{H}(v,r)$} For each $v\in V$
and $r\in\N$, we define
\begin{eqnarray}
\label{hrt}
\tilde{H}(v,r)=\{\exists\,w\in\cP\cap B(v,100r):\,R_w \geq r\}.
\end{eqnarray}


\blem The following inclusion holds for all $r\in\N$:
\label{Escala2}

\begin{eqnarray}
\label{escala} F(v,10r)\cap\tilde{H}(v,r)^c&\subset&
\bigcup_{u\in A(v,r,10)}F(u,r)
\bigcap
\bigcup_{w\in A(v,r,80)}F(w,r),
\end{eqnarray} \normalsize
where $A(v,r,m)$ is a $r$-separated subset of $S(v,mr)$ such that $\medida{A(v,r,m)}  =\newline  N(S(v,mr),r)$.
\elem

\subsection*{Proof of Lemma \ref{Escala2}}
\label{Geom3}
Fix $r\in\N$. First, assume that the event $F(v,10r)$ occurs but the event
$\tilde{H}(v,r)$ does not occur. Since $F(v,10r)$ occurs we can go from the
vertex $v$ to the complement of the ball $B(v,80r)$ just using balls $B(u,R_u)$
centered at points from $\cP\cap B(v,100r)$. In this way, we can go from the
sphere $S(v,10r)$ to the sphere $S(v,80r)$. One of these balls, let say
$B(u_*,R_{u_*})$, touches $S(v,10r)$. Since the sphere $S(v,10r)$ is a subset of
$\bigcup_{u\in A(v,r,10)}B(u,r)$ we get that this ball touches a ball of the
form $B(u,r)$ for some $u$ in $A(v,r,10)$.

Now we shall prove that, for this $u$, the event $F(u,r)$ occurs. It is easy to see that we can go from $B(u,r)$ to the complement of $B(u,8r)$ just using balls of the form $B(w,R_w)$ centered at points from $\cP\cap B(v,100r)$.Since $\tilde{H}(v,r)$ does not occur, the radius of any such ball is less than $r$. Then, we can go from $B(u,r)$  to the complement of $B(u,8r)$ just using balls of the form $B(w,R_w)$ centered at points from $\cP\cap B(u,10r)$. In other words, the event $F(u,r)$ occurs. Then, the event $\bigcup_{u\in A(v,r,10)}F(u,r)$ does occur. The proof that the event $\bigcup_{w\in A(v,r,80)}F(w,r)$ does occur follows in the same lines.
\hfill\square

\medskip

The event on the right side of \reff{escala} is the intersection of two events. The events do only depend on the restriction of the point process to the mentioned regions, and thus the events are independent.
It follows from \reff{escala} that


\begin{eqnarray}
\P(F(v,10r))& \leq & \sum_{u\in A(v,r,10)}\P_{p,\,\rho}(F(u,r)) \nonumber \\
&\times & \sum_{w\in A(v,r,80)}\P_{p,\,\rho}(F(w,r))  \label{Ineq2}\\
&+&\P_{p,\,\rho}(\tilde{H}(v,r)).\nonumber
\end{eqnarray}

Notice that $\medida{A(v,r,m)}\leq C_1 m^{\asdim}$ for all $v\in V$, $r\in\N$ and $m\in\N$. Therefore, by \reff{Ineq2} we get
\begin{eqnarray} \textstyle
\sup_{v\in V}\P_{p,\,\rho}(F(v,10r))&\leq& K\left(\sup_{v\in V}\P_{p,\,\rho}(F(v,r))\right)^2\label{Ineq4}\\
&&+\sup_{v\in V}\P_{p,\,\rho}(\tilde{H}(v,r)),   \nonumber
\end{eqnarray}

where $K=C_{1}^{2}800^{\asdim}$.

\blem
\label{SB}
There exists positive constants $C_2$ and $C_3$, which depend only on the Assouad dimension $\asdim$ of $\Ga$, such that for each $v\in V$ and $r\in\N$, the following inequalities hold:
\begin{eqnarray}
\label{SB1}
\P_{p,\,\rho}(F(v,r))&\leq& p\, C_2r^{\asdim},\\
\label{SB2}
\P_{p,\,\rho}(\tilde{H}(v,r))&\leq& p\,C_3\E_{\rho}\left[R^{\asdim}\one\{R\geq r\}\right].
\end{eqnarray}
\elem

\subsection*{Proof of Lemma \ref{SB}}
Let $r\in\N$. A simple computation shows that
\begin{eqnarray*}
\P_{p,\,\rho}(F(v,r))&\leq&\P_{p,\,\rho}(\exists\,x\in\cP\cap B(v,10r))\nonumber\\
&\leq& p\,\medida{B(v,10r)}\\
&\leq& p\, C_2r^{\asdim},
\end{eqnarray*}
where $C_2=C_1 10^{\asdim}$. In the last inequality we used inequality (\ref{constant}).

To show \reff{SB2} we note that $\tilde{H}(v,r)=1\{Y_v\geq 1\}$, where $Y_v$ is a random variable defined by
\[Y_v=\sum_{u\in B(v,100r)}\one\{u\in\cP\}\one\{R_u\geq r\}.\]
We have
\begin{eqnarray*}
\P_{p,\,\rho}(\tilde{H}(v,r))&\leq&\E_{p,\,\rho}\left[Y_v\right]\nonumber\\
&=&\sum_{u\in B(v, 100r)}p\,\P_{\rho}(R_u\geq r)\nonumber\\
&=&p\,\medida{B(v,100r)}\P_{\rho}(R\geq r)\nonumber\\
&\leq& p\,C_3 r^{\asdim} \P_{\rho}(R\geq r)\nonumber\\
&\leq& p\,C_3\E_{\rho}\left[R^{\asdim}\one\{R\geq r\}\right],
\end{eqnarray*}
where $C_3=C_{1}100^{\asdim}$. The first equality follows from the independence between $\cP$ and $\cR$ and the second equality follows from the fact that the random variables $R_u, u\in V$ are identically distributed. The second inequality follows from inequality (\ref{constant}). \hfill\square

\subsection{Proof of Theorem \ref{T1}}
By \reff{Mgrande3}, the proof of Theorem \ref{T1} is reduced to show the existence of $p_0>0$ such that there exists an increasing sequence $(r_n)_{n\in\N}$ of natural numbers with $\displaystyle \lim_{n\to\infty} \P_{p,\,\rho}(F(v,r_n))$ $=0$ for all $p<p_0$, $v\in V$. For this reason we need the following lemma.

\blem
\label{FG0}
Let $f$ and $g$ be two functions from $\N$ to $\R_+$ satisfying the following conditions: (i) $f(r)\leq 1/2$ for all $r\in \{1,\dots, 10\}$; (ii) $g(r)\leq 1/4$ for all $r\in\N$; (iii) for all $r\in\N$:
\begin{eqnarray}
\label{FG1}
f(10r)\leq f^2(r)+g(r).
\end{eqnarray}
If $\lim_{r\to\infty}g(r)=0$, then $\lim_{n\to\infty}f(10^nr)=0$ for each $r\in\{1,\dots, 10\}$.
\elem

\subsection*{Proof of Lemma \ref{FG0}}
For each $n\in\N$, let  $F_n=\max_{1\leq r\leq 10}f(10^nr)$ and let $G_n=\max_{1\leq r \leq 10}g(10^nr)$. Using \reff{FG1} and hypothesis (i) and (ii) we may conclude, by means of the induction principle that, for each $n\in\N$,  $F_n\leq 1/2$ and
\begin{eqnarray}
\label{CotaIndb}
F_n \leq \frac{1}{2^{n+1}}+\displaystyle \sum_{j=0}^{n-1}\frac{1}{2^j}G_{n-1-j}.
\end{eqnarray}
Since $g(10^nr)$ goes to zero as $n\to\infty$ we have that $G_n\to 0$ when $n\to\infty$. By \reff{CotaIndb}, we obtain that $F_n\to 0$ when $n\to\infty$. \hfill\square

\medskip

Now we complete the proof of Theorem \ref{T1}. Consider the functions
$$f(r)=K\sup_{v\in V}\P_{p,\,\rho}(F(v,r))$$
and
$$g(r)=K\sup_{v\in V}\P_{p,\,\rho}(\tilde{H}(v,r)).
$$
It follows from \reff{Ineq4} that
\begin{eqnarray}
\label{Ineq5}
f(10r)\leq f^2(r)+g(r).
\end{eqnarray}

By condition \reff{E100f} and \reff{SB2} we have that $\lim_{r\to\infty}g(r)=0$ for any $p$.

We show that there exists $p_0>0$ such that if $p<p_0$ then $f(r)\leq 1/2$, $1\leq r\leq 10$ and $g(r)\leq 1/4$, $r\in\N$.

Set
\[p_0=\min\left(\frac{1}{2KC_210^{\asdim}}, \frac{1}{4KC_3\E_{\rho}\left[R^{\asdim}\right]}\right).\]

By condition \reff{E100f}, we get $p_0>0$.

Let $p>0$ be such that $p\leq p_0$. It follows from \reff{SB1} that
\begin{eqnarray*}
f(r)\leq\frac{1}{2}\left(\frac{r}{10}\right)^{\asdim}.
\end{eqnarray*}
Thus we have that if $0<p\leq p_0$, then $\max_{1\leq r\leq 10}f(r)\leq 1/2$.

By \reff{SB2}, we get
\begin{eqnarray*}
g(r)\leq\frac14.
\end{eqnarray*}

Finally, by Lemma \ref{FG0}, we have that $\lim_{n\to\infty}f(10^nr)=0$ for each $r\in\{1,\dots, 10\}$.
In particular
\[\lim_{n\to\infty}f(10^n)=\lim_{n\to\infty}K\sup_{v\in V}\P_{p,\,\rho}(F(v,10^n))=0.\]
\noindent This finishes the proof of Theorem \ref{T1}.
\hfill\square


\bigskip



\section{Proof of Theorem \ref{Riidinf}}\label{infr}
Fix some (any) vertex $v \in V$. We will prove that the following assertion holds:
\[\P_{p,\,\rho}(\exists\, w\in\cP: v \in B(w,R_w))=1.\]
Since
\begin{eqnarray*}
\P_{p,\,\rho}\left(\exists w \in \mathcal{P}: v \in B(w,R_{\omega})\right) &=& \P_{p,\,\rho}\left(\exists w \in \mathcal{P}: R_{w} > d(v,w)\right) \nonumber \\
&=& 1 - \P_{p,\,\rho}\left(\cap_{w \in V}(X_w=1,R_w > d(w,v))^c\right), \nonumber
\end{eqnarray*}
we will prove that
\begin{equation}\label{prodinf}
\P_{p,\,\rho}\left(\cap_{w \in V}(X_{\omega}=1,R_w > d(w,v))^c\right) = 0 .
\end{equation}
Since $\mathcal{X}=(X_w : w \in V)$ and $\mathcal{R}=(R_w:w \in V)$ are two families of independent random variables which are also independent between them, we have that (\ref{prodinf}) holds if and only if
\begin{equation}\label{inftyf}
\prod_{w \in V}\P_{p,\,\rho}\left((X_w=1,R_w > d(w,v))^c\right) = 0.
\end{equation}
It is well known that (\ref{inftyf}) holds if, and only if

\begin{eqnarray}\label{sum_inf}
\sum_{w \in V}\left(1-\P_{p,\,\rho}\left((X_w=1,R_w > d(w,v))^c\right)\right) &=& + \infty. \nonumber
\end{eqnarray}
Since

\begin{eqnarray}
1-\P_{p,\,\rho}\left((X_w=1,R_w > d(w,v))^c\right) &=& \P_{p,\,\rho}\left(X_w=1,R_w > d(w,v)\right) \nonumber \\
&=& \P_{p,\,\rho}\left(X_w=1\right) \P_{p,\,\rho}\left(R_w > d(w,v)\right) \nonumber \\
&=& p \P_{\rho}\left(R_v > d(w,v)\right) \nonumber \\
&=& p \E_{\rho}\left[1\{\omega \in B(v,R_v)\}\right], \nonumber
\end{eqnarray}
we have that

\begin{eqnarray}
\sum_{w \in V}\left(1-\P_{p,\,\rho}\left((X_w=1,R_w > d(w,v))^c\right)\right) &=& p \sum_{w \in V} \E_{\rho}\left[1\{w \in B(v,R_v)\}\right] \nonumber \\
&=& p \E_{\rho}\left[\medida{B(v,R_v)}\right].
\end{eqnarray}
We conclude that $\P_{p,\,\rho}(\exists\, w\in\cP: v \in B(w,R_w))=1$ if, and only if $\E_{\rho}\left[\medida{B(v,R_v)}\right] = + \infty$. Since $\Gamma$ is a graph whose growth is at least polynomial, we get 
that

\begin{eqnarray}
\E_{\rho}\left[\medida{B(v,R_v)}\right] &\geq& C_1 \E_{\rho}\left[R^C\right] \nonumber \\
&=& +\infty.
\end{eqnarray}


\section{The Number of Infinite Clusters}
\label{ergodicitynumberofinfiniteclusters}

In this section we address the problem of determining {\it{how many infinite connected component there can be}}. We give an answer when the underlying graph has a family of symmetries which ``acts separating points''. We begin by recalling that an isometry on a graph
$\Ga$ with vertex set $V$ is a function $g: V \to V$ preserving the geodesic
distance of the graph, i.e., $d\left(g(v),g(w)\right)=d(v,w)$. We denote by
$\Iso(\Ga)$ the group of isometries of $\Ga$ and we observe that
isometries preserve the counting measure in $\Ga$.

We say that a family of isometries $S \subset \Iso(\Ga) $ \textit{acts  separating
points} of $\Ga$ if the orbit of any vertex of  $\Ga$ by the action of $S$ is infinite.

As a direct consequence of this  definition  we have also that $S$ separates compacts (i.e., finite sets). In other words, given a
compact set $K\in \Ga$, there exists $g\in S$ such that $g(K)\cap K = \emptyset$.

Let $(\cX,\cR)$ be a Bernoulli marked point process in $\Ga$ determined by a
family of random variables $\cX=(X_v:\, v\in V)$ and $\cR=(R_v: v\in V)$. We
say that an isometry $g:\Ga \to \Ga$ leaves the marked point  process
invariant if the random variables $R_{g(v)}$ and $R_{v}$ are
equally distributed. Henceforth we assume that the isometries $g\in S$ leave the marked point  process  invariant.

In order to state the result about ergodicity of the Boolean model we need to
define the action of the family of isometries on the marked point process.
For that purpose we assume that the
marked point process is defined in the space  of counting measures

\[
(\hat{\Ga}, \mathcal{A}):= (\mathcal{N}(\Ga\times \N_0), \bor{\Ga \times \N_0})
\]

\noindent where $\mathcal{N}(\Ga\times \N_0)$ is the set of locally finite counting  measures  in
$\Ga\times \N_0$. Let $\P_{p,\,\rho}$ be the distribution of a marked point process with retention parameter $p$.

For each isometry $g \in S$ leaving the marked point  process
invariant, we induce a map $\hat{g}:(\hat{\Ga}, \mathcal{A},\P_{p,\,\rho}) \rightarrow (\hat{\Ga}, \mathcal{A})$ as follows:

\[
\hat{g}(\omega)(B) = \omega(g^{-1}(B)),
\]

\noindent where  $B\in \bor{\Ga \times \N_0}$ e $\omega \in \hat{\Ga}$. The function  $\hat{g}$  is measurable and we observe that if $g$ leaves the process invariant then $\hat{g}$ is measure-preserving.

Let $\mathcal{I}$ denote the sigma-field of events that are invariant under all isometries of $S$, then the measure $\P_{p,\,\rho}$ is called $S$-\textit{ergodic} if for each $A\in \mathcal{A}$ , we have either $\P_{p,\,\rho}(A) = 0$ or $\P_{p,\,\rho}(A^c) = 0$.

We also note that the Boolean model is \textit{insertion tolerant}, i.e., 
$$\P_{p,\,\rho}(A\cup \{v\})>0$$ for every vertex $v$  and  every measurable $A$ determined by the marked point process $\left(\cX,\cR\right)$ with $\P_{p,\,\rho}(A)>0$. In the previous definition, we use that a vertex may be viewed as a ball of radius $0$ and  hence it can be identified with an element of $\bor{\Ga \times \N_0}$.
The insertion tolerant property follows from a direct comparison between the discrete Boolean percolation model and the underlying Bernoulli point process. In fact, a stronger property holds:

\[
\P_{p,\,\rho}(A\cup \{v\}) \geq p\P_{p,\,\rho}(A).
\]

Clearly, the definition of insertion tolerant may be generalized to any finite set of vertices $K$ and we may conclude that $\P_{p,\,\rho}(A\cup K)>0$ for any  finite subset  $K$ and any measurable set $A$ satisfying $\P_{p,\,\rho}(A)>0$. Then, an adaptation of the arguments given in \cite{yuval} yields:

\begin{teo}[Ergodicity of the Boolean model] \label{ergodicity}
Let $(\cX,\cR)$  be a Ber\-nou\-lli marked point process in a connected locally finite graph $\Ga$. Assume that  $\Ga$  has a family of isometries $S$ separating  points and leaving the marked point process $(\cX,\cR)$ invariant. Then the Boolean discrete percolation model is $S$-ergodic.
\end{teo}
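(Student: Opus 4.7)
The plan is to carry out the classical \emph{mixing implies ergodicity} argument in the Bernoulli marked-point-process setting. Let $A \in \mathcal{I}$ be an $S$-invariant event, so that $\hat{g}^{-1}(A) = A$ for every $g \in S$; I want to prove $\P_p(A) = \P_p(A)^2$, which forces $\P_p(A) \in \{0,1\}$.

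First I would approximate $A$ by a cylinder event. Because $V$ is countable and the marks are $\N_0$-valued, the $\sigma$-algebra $\mathcal{A}$ is generated by the algebra of events depending only on the independent coordinates $\{(X_v, R_v): v \in K\}$ for finite $K \subset V$. Hence for every $\epsilon > 0$ there exist a finite $K \subset V$ and a cylinder event $B$ supported on $K$ with $\P_p(A \triangle B) < \epsilon$.

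Next I would exploit the separation property to decorrelate $B$ from a translated copy of itself. Since $S$ separates points (and hence finite sets), there is $g \in S$ with $g(K) \cap K = \emptyset$; as $g$ is a bijection, this also gives $g^{-1}(K) \cap K = \emptyset$. Unwinding $\hat{g}(\omega)(C) = \omega(g^{-1}(C))$ shows that $\hat{g}^{-1}(B)$ depends only on the coordinates in $g^{-1}(K)$, so by the product structure of the underlying Bernoulli marked process, $B$ and $\hat{g}^{-1}(B)$ are independent and $\P_p(B \cap \hat{g}^{-1}(B)) = \P_p(B)^2$. Moreover $\hat{g}$ is measure-preserving by assumption, so $\P_p(\hat{g}^{-1}(A) \triangle \hat{g}^{-1}(B)) = \P_p(A \triangle B) < \epsilon$. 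Combining these facts with $A = \hat{g}^{-1}(A)$ and the triangle-inequality estimate for symmetric differences gives
\[
\bigl|\P_p(A) - \P_p(A)^2\bigr| \;=\; \bigl|\P_p(A \cap \hat{g}^{-1}(A)) - \P_p(B \cap \hat{g}^{-1}(B))\bigr| \;\leq\; 4\epsilon,
\]
so letting $\epsilon \to 0$ concludes the proof.

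The main obstacle is bookkeeping rather than conceptual: one must check carefully that a cylinder event supported on $K$ is pulled back by $\hat{g}$ to a cylinder event supported on $g^{-1}(K)$, so that independence of disjoint vertex blocks can actually be invoked, and that measure-preservation of $\hat{g}$ transfers the approximation $\P_p(A \triangle B) < \epsilon$ to its image. All other ingredients --- density of cylinder events, product-measure independence on disjoint coordinates, and the existence of $g \in S$ with $g(K) \cap K = \emptyset$ --- are immediate from the hypotheses. As the authors indicate, this is precisely the adaptation to the marked-point setting of the standard tail-trivial-implies-ergodic argument in \cite{yuval}.
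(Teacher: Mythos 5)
Your proposal is correct and follows essentially the same route as the paper's proof: approximate the $S$-invariant event $A$ by a cylinder event $B$ supported on a finite vertex set $K$, use the separating isometry to obtain $g$ with $gK\cap K=\emptyset$ so that $B$ and its pullback are independent under the product measure, and conclude $|\P_{p}(A)-\P_{p}(A)^2|\leq 4\epsilon$. The only differences are cosmetic bookkeeping (working with $\hat{g}^{-1}$ rather than $\hat{g}$, and spelling out why measure preservation transfers the approximation), which the paper leaves implicit.
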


\begin{proof}
Let $A$ be an $S$-invariant subset of $\Ga\times \N_0$, i.e., a set
satisfying
$\hat{g}A=A$, for all $g\in S$.
The idea is to show that $A$ is almost independent of $\hat{g}A$  for some $g$.

Let $\epsilon > 0$. Since $A$ is measurable, we may conclude from Theorem A.2.6.3 III in
\cite{verej}, that there exists a cylinder event
$B$ which depends only on some finite set $K$ such
that $\P_{p,\,\rho}(A \triangle B) < \epsilon$. For all $g\in S$, we have
$\P_{p,\,\rho}(\hat{g}A \Delta \hat{g}B) = \P_{p,\,\rho}[\hat{g}(A \Delta B)] <\epsilon$. By
assumption $S$ acts separating points, then there exists some $g\in S$ such
that $K$ and $gK$ are
disjoint. Since $gB$ depends only on $gK$, it follows that for some $g\in S$ the events 
$B$ and $gB$ are
independent. Thus,
  \begin{eqnarray*}
 |\P_{p,\,\rho} (A) - \P_{p,\,\rho} (A)^2 | &=& |\P_{p,\,\rho} (A\cap gA) - \P_{p,\,\rho} (A)^2 |\\
&\leq& |\P_{p,\,\rho} (A\cap gA) - \P_{p,\,\rho} (B\cap gA)| \\
&+& |\P_{p,\,\rho} (B\cap gA) - \P_{p,\,\rho} (B\cap
gB)| \\
&+& |\P_{p,\,\rho} (B\cap gB) - \P_{p,\,\rho} (B)2 | \\
&+& |\P_{p,\,\rho} (B)2 - \P_{p,\,\rho} (A)^2 |\\
&\leq& \P_{p,\,\rho} (A \triangle B) + \P_{p,\,\rho} (gA \triangle gB) \\
&+& |\P_{p,\,\rho} (B)\P_{p,\,\rho} (gB) -
\P_{p,\,\rho} (B)^2 |\\
&+& |\P_{p,\,\rho} (B) - \P_{p,\,\rho} (A)| \left(\P_{p,\,\rho} (B) + \P_{p,\,\rho} (A)\right) \\
&<& 4\epsilon .
\end{eqnarray*}
Therefore $\P_{p,\,\rho} (A) \in \{0, 1\}$ and the proof is complete.
\end{proof}

As a consequence of the ergodicity of the Boolean model we get the fo\-llo\-wing theorem.

\begin{teo}
Let $(\cX,\cR)$  be a Bernoulli marked point process in a connected locally finite graph $\Ga$. Assume that  $\Ga$  has a family of isometries $S$ separating  points and leaving the marked point process $(\cX,\cR)$ invariant. Then the number
of infinite clusters in the Boolean discrete percolation model is constant a.s. and equal either $0$, $1$, or $\infty$.
\end{teo}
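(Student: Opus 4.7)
The strategy is the classical Newman--Schulman dichotomy, which is available here because Theorem \ref{ergodicity} gives $S$-ergodicity and insertion tolerance is in force.

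Let $N\in\{0,1,2,\dots\}\cup\{\infty\}$ denote the number of infinite clusters of $\cG(\cX,\cR)$. Since every $g\in S$ is an isometry that leaves $(\cX,\cR)$ invariant, the induced map $\hat{g}$ carries edges of $\cG(\cX,\cR)$ to edges and therefore permutes the infinite clusters bijectively; hence $\{N=k\}$ is $S$-invariant for each $k$. Theorem \ref{ergodicity} then forces $N$ to be almost surely equal to some deterministic constant $k_0\in\{0,1,2,\dots\}\cup\{\infty\}$.

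The substance of the proof is to exclude $2\leq k_0<\infty$. I argue by contradiction: suppose $N=k_0$ a.s.\ for some finite $k_0\geq 2$, fix a reference vertex $o\in V$, and let $F_n$ be the event that every one of the $k_0$ infinite clusters meets the finite set $B(o,n)$. Since $k_0$ is finite, the events $F_n$ are increasing in $n$ and $\P_{p,\,\nu}\!\left(\bigcup_n F_n\right)=1$, so I can pick $n$ with $\P_{p,\,\nu}(F_n)>0$. The ball $B(o,n)$ is connected as a subgraph of $\Ga$ (any geodesic from $o$ to a point $v\in B(o,n)$ stays inside $B(o,n)$); therefore, if on $F_n$ one modifies the configuration inside $B(o,n)$ by declaring every $v\in B(o,n)$ to lie in $\cP$ with mark $R_v\geq 1$, then all vertices of $B(o,n)$ lie in a single cluster after modification, which merges the $k_0$ infinite clusters into one. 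So on this event $N\leq 1<k_0$, and insertion tolerance shows the modified configuration still has positive probability, contradicting $N=k_0$ a.s.

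The main obstacle is the finite-volume insertion-tolerance step, since the excerpt records the property only one vertex at a time. I would handle it by induction on $|B(o,n)|$: at each step the bound $\P_{p,\,\nu}(A\cup\{v\})\geq p\,\P_{p,\,\nu}(A)$ is applied to the cylinder event describing the configuration on the already-treated vertices, augmented by a choice of admissible mark $R_v\geq 1$ for $v$. The resulting product of finitely many factors of $p\,\nu(\{R\geq 1\})$ is strictly positive (the degenerate case $\nu=\delta_0$ is trivial, as it produces no edges and hence no infinite cluster); once this routine technicality is in place, the rest of the proof is a direct translation of the Newman--Schulman argument to the present setting.
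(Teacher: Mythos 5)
Your proof is correct and takes essentially the same route as the paper: $S$-invariance of the cluster count together with Theorem \ref{ergodicity} gives almost-sure constancy, and the Newman--Schulman argument (a ball meeting all $k_0$ infinite clusters plus insertion tolerance to merge them) rules out $2\le k_0<\infty$. You are in fact somewhat more careful than the paper on the step where insertion tolerance is iterated over the finitely many vertices of the ball, which the paper invokes in a single line.
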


\begin{proof}
 Let $N_{\infty}$ denotes the number of infinite clusters. The action of any element of $S$ on a configuration does not change the value $N_{\infty}$. In order to prove this assertion, consider $\hat{g} \in \hat{\Ga}$ and $\omega$ a realization of  the process. Then  $N_{\infty}(\hat{g}(\omega)(\Ga)) = N_{\infty}(\omega(g^{-1}(\Ga)) = N_{\infty}(\omega(\Ga))$. In other words, $N_{\infty}(\hat{g}(\omega)) = N_{\infty}(\omega)$. Hence, $N_{\infty}$ is measurable with respect to the sigma algebra of the $S$-invariants sets,  $\mathcal{I}_S$. Since the Boolean model is ergodic, we may conclude that $N_{\infty}$ is, a.s., constant.

Now, assume that  $N_C  = k \geq 2$. Let $v\in V$. Then, there exists an $R> 0$ such that $B(v,R)$  intersects all the $k$ infinite clusters.  It follows from the insertion tolerant property that the probability  of the number of infinite cluster being one is positive, contradicting the hypothesis that $N_C \geq 2$ a.s. The proof is complete.
\end{proof}

\addcontentsline{toc}{chapter}{Bibliography}
\bibliography{doubling}{}
\bibliographystyle{siam}

\end{document}